\documentclass{amsart}

\usepackage[T1]{fontenc}
\usepackage{amsmath,amssymb}

\newcommand{\C}{\mathbb C}
\newcommand{\D}{\mathbb D}
\newtheorem{theorem}{Theorem}[section]
\newtheorem{lemma}[theorem]{Lemma}
\newtheorem{proposition}[theorem]{Proposition}
\theoremstyle{remark}
\newtheorem{remark}[theorem]{Remark}

\title[A two-dimensional counterexample]{A two-dimensional counterexample for Kobayashi completeness of balanced domains}
\author{Armen Edigarian}
\address{Faculty of Mathematics and Computer Science, Jagiellonian University, Prof. St. \L{}ojasiewicza 6, 30-348 Krak\'ow, Poland}
\email{armen.edigarian@uj.edu.pl}
\date{September 5, 2026}
\subjclass[2020]{Primary 32F45; Secondary 32T05, 32U05}
\keywords{Kobayashi distance, Kobayashi completeness, balanced domain, Minkowski function, pseudoconvex domain, plurisubharmonic function}

\begin{document}

\begin{abstract}
Jarnicki and Pflug constructed a bounded balanced domain of holomorphy in $\C^n$, $n\ge 3$, with continuous Minkowski function which is not Kobayashi complete, and explicitly asked whether the same phenomenon can occur in dimension two. We give such a domain in $\C^2$.
\end{abstract}

\maketitle

\section{Introduction}

A domain $G\subset\C^n$ is balanced if $\lambda G\subset G$ whenever $|\lambda|\le 1$. Its Minkowski function is
$h_G(z)=\inf\{t>0:z/t\in G\}$, so that $G=\{h_G<1\}$ and $h_G(\lambda z)=|\lambda|h_G(z)$. For bounded pseudoconvex balanced domains, completeness of invariant distances is closely related to regularity of the Minkowski function.

Barth proved that continuity of the Minkowski function is necessary for Kobayashi completeness of a bounded pseudoconvex balanced domain; see~\cite{Barth}. Jarnicki and Pflug showed that the converse fails in every dimension $n\ge3$: there exists a bounded balanced domain of holomorphy with continuous Minkowski function which is not Kobayashi complete~\cite{JPcounterexample}. Their construction, based on an idea of Sibony, raises the dimension by one. In their companion paper they formulated explicitly the remaining question as Problem~2.4: does their theorem remain valid for $n=2$? See~\cite[Problem~2.4]{JPproduct}. The same two-dimensional gap is recorded in the discussion of Kobayashi completeness in their monograph; see~\cite[Chapter~VII, \S7.5]{JPbook1} and the second edition~\cite[Chapter~14]{JPbook2}.

The purpose of this note is to answer this question affirmatively.

\begin{theorem}\label{thm:main}
There exists a bounded balanced pseudoconvex domain $G\subset\C^2$ with continuous Minkowski function such that $(G,k_G)$ is not complete. In particular, $G$ is a bounded balanced domain of holomorphy in $\C^2$ with continuous Minkowski function which is not Kobayashi-finitely-compact.
\end{theorem}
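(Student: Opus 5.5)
We construct $G=\{h<1\}$ for an explicit Minkowski function $h$ on $\C^2$ of the form
\[
h(z,w)=\max\bigl\{\,\varepsilon(|z|+|w|),\ |w|\,e^{u(z/w)}\bigr\},
\]
where $u$ is a bounded subharmonic function on $\C$ chosen below. The first term forces boundedness. The expression $|w|e^{u(z/w)}$ is $1$-homogeneous, and $\log$ of it, namely $\log|w|+u(z/w)$, is plurisubharmonic off $\{w=0\}$. Since $u$ is bounded, it extends plurisubharmonically across $\{w=0\}$. Hence $\log h$ is psh, so $G$ is a bounded balanced pseudoconvex domain, and therefore a domain of holomorphy.

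Continuity of $h$ is equivalent to continuity of $u$ together with the homogeneity. Near $w=0$ the term $|w|e^{u}$ tends to $0$ while $\varepsilon(|z|+|w|)$ stays comparable to $|z|$, so continuity there is automatic once $u$ is bounded. So we take $u$ continuous and bounded, say $u=\sum_j c_j\max\{\log|\zeta-a_j|,-M_j\}$-type truncations. The point of the construction is that the boundary $\partial G$ still carries analytic-disc-like obstructions coming from almost-polar behaviour. The Sibony idea in dimension $n\ge3$ uses an extra variable to manufacture a psh function with this behaviour. We instead exploit directly that a continuous subharmonic $u$ can have $\{u=\sup u\}$ nonempty in a thin way, or can nearly attain $-\infty$ on a sequence $a_j\to a_\infty$.

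\textbf{Non-completeness.} We produce a sequence $p_j\in G$ leaving every compact subset of $G$ with $\sum_j k_G(p_j,p_{j+1})<\infty$. The plan is as follows.

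(i) Choose points $a_j\to a_\infty$ in $\C$ and weights $c_j$ with $\sum c_j<\infty$, and define $u=\sum_j c_j\max\{\log|\zeta-a_j|,\log\delta_j\}$. This $u$ is continuous because the sum converges uniformly. The truncation at $\delta_j$ makes $u(a_j)$ finite, but it nearly equals $\inf u$ locally.

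(ii) Take $p_j=(t_ja_j,t_j)$ with $t_j$ chosen so that $h(p_j)\to1$. In the Minkowski picture the fibre of $G$ over the direction $[a_j:1]$ is the disc $|w|<e^{-u(a_j)}$ (when this term dominates), and $e^{-u(a_j)}$ is large because of the dip at $a_j$.

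(iii) Estimate $k_G(p_j,p_{j+1})$ by an explicit holomorphic disc $\varphi_j:\D\to G$ passing through both points. A natural choice is $\varphi_j(\lambda)=\bigl(\gamma_j(\lambda)\,\ell_j(\lambda),\,\ell_j(\lambda)\bigr)$, where $\gamma_j$ is a segment-like map from $a_j$ to $a_{j+1}$ and $\ell_j$ is holomorphic with $|\ell_j(\lambda)|<e^{-u(\gamma_j(\lambda))}$. Such an $\ell_j$ exists whenever $u\circ\gamma_j$ is harmonic-majorized: take $\ell_j=e^{-H-iH^*}$ with $H\ge u\circ\gamma_j$ harmonic. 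Because $u$ is only a small perturbation along $\gamma_j$ away from $a_j$, the disc can be fat near both endpoints. A Schwarz–Pick estimate then gives $k_G(p_j,p_{j+1})\lesssim c_j+(\text{geometric factor})$, and this should be summable.

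(iv) Finally, $p_j\to(ta_\infty,t)$ with $h=1$, a boundary point, so $(G,k_G)$ is not complete.

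\textbf{Main obstacle.} The hardest step is (iii): bounding the Kobayashi distance \emph{uniformly} near the boundary. The harmonic majorant of $u\circ\gamma_j$ must be within $o(1)$ of $u$ at the two interpolated points while $h(p_j)\to1$. This forces a delicate choice of $c_j,\delta_j,|a_j-a_{j+1}|$, for instance $c_j=2^{-j}$ and $\log\delta_j\ll -1/c_j$, so that the logarithmic poles are essentially invisible to harmonic measure. I would first try to verify this via the Poisson integral on a thin strip containing $[a_j,a_{j+1}]$, and to check that the accumulated distance is $O(\sum c_j)$.
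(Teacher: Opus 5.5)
Your overall architecture, with $h$ built from $|w|e^{u(z/w)}$ and truncated logarithmic wells at $a_j\to a_\infty$, matches the paper's. However, the decisive part, non-completeness, is not proved. Step (iii) ends with ``this should be summable'', and the heuristics you offer point the wrong way. With $\log\delta_j\ll-1/c_j$, the $j$-th well has depth $c_j|\log\delta_j|\gg1$, so the oscillation of $u$ near $a_\infty$ does not tend to $0$. The series is then not uniformly convergent, and $u$, hence $h$, is discontinuous. This contradicts (i), and would not prove the statement anyway, since for a discontinuous Minkowski function non-completeness is already Barth's theorem. Continuity forces the opposite regime: the depth $c_j|\log\delta_j|$ must tend to $0$ summably, i.e.\ $|\log\delta_j|\ll 1/c_j$.

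Moreover, in (iii) it is not enough that the harmonic majorant of $u\circ\gamma_j$ be within $o(1)$ of $u$ at the interpolation points. It must be within $-\log h(p_j)$ and $-\log h(p_{j+1})$, which themselves tend to $0$, while $\sum_j p_{\D}(\lambda_1,\lambda_2)<\infty$. Nothing in the proposal establishes this. The bound $k_G(p_j,p_{j+1})\lesssim c_j+\dots$ is also unsupported: the mass of a well is not what controls the cost.

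The missing idea is the quantitative decoupling used in the paper. Normalize $a_\infty=0$, $u(0)=0$, and put $s_j=-u(a_j)$. You need radii $R_j$ with $u<CR_j$ on $B(0,R_j)$ such that $\sum_j R_j/s_j$, $\sum_j R_j/s_{j+1}$ and $\sum_j|a_j-a_{j+1}|/R_j$ all converge, while $s_j\to0$.
\begin{itemize}
\item The linear upper bound $u(\zeta)\le C|\zeta|$ follows from $\sum_j c_j/|a_j|<\infty$; the paper gets $u(\zeta)\le\frac45|\zeta|$.
\item Tiny masses with extremely small cores make the depths $s_j$ much larger than $R_j$, which in turn is much larger than $|a_j|$. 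The paper takes $a_j=4^{-j}$, $R_j=2^{-j}$, and obtains $s_j\ge 2^{-j/2}/2$.
\end{itemize}
The distance is then estimated by a three-disc chain rather than a single disc:
\begin{itemize}
\item The map $\lambda\mapsto\lambda(a_j,1)$ on $|\lambda|<e^{s_j}$ moves $P_j=(a_j,1)$ to $e^{-t_j}P_j$ at cost at most $t_j/s_j$, with $t_j\approx R_j$.
\item The map $\zeta\mapsto e^{-t_j}(\zeta,1)$ on $|\zeta|<R_j$ moves this to $e^{-t_j}P_{j+1}$ at cost $O(|a_j-a_{j+1}|/R_j)$.
\item A second radial disc returns to $P_{j+1}$.
\end{itemize}
Your single-disc version would need the exact analogue of these estimates, and you do not supply it.

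Separately, your set-up claim that $u$ is bounded is false as stated. A subharmonic function bounded above on $\C$ is constant, and then $G$ is convex, hence complete. Your concrete $u$ grows like $(\sum_j c_j)\log|\zeta|$. With $c_j=2^{-j}$, $j\ge1$, the total mass is $1$, so $|w|e^{u(z/w)}\to|z|$, not $0$, as $w\to0$. This is repairable: total mass at most one is what is needed, and mass exactly one gives the paper's $h=|w|e^{u(z/w)}$ with $u=\log|\zeta|+C+o(1)$. But the justification must be the growth of $u$, not its boundedness.
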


The point of the construction is that in dimension two a homogeneous plurisubharmonic function can be produced from a one-variable subharmonic function $u$ by setting $h(z,w)=|w|e^{u(z/w)}$. The difficulty is to arrange simultaneously two competing properties near a point $a_j\to0$: the value $u(a_j)$ must be sufficiently negative, while a disk much larger than $|a_j|$ must lie in a very small positive sublevel set of $u$. A logarithmic well with tiny mass and extremely small core radius decouples these two requirements.

Throughout, $p_{\D}$ denotes the Poincar\'e distance on $\D$ in the normalization
$p_{\D}(\alpha,\beta)=\operatorname{arctanh}\left|\frac{\alpha-\beta}{1-\overline\alpha\beta}\right|$. We use only its distance-decreasing property under holomorphic maps. Standard facts concerning plurisubharmonic functions and pseudoconvexity may be found, for example, in~\cite{Hormander}.

\section{The one-variable potential}

We begin with four sequences. For $j\ge1$ put $a_j=4^{-j}$, $R_j=2^{-j}$, $m_j=16^{-j}$ and $d_j=2^{-j/2}$. Thus $R_j=\sqrt{a_j}$, $m_j=a_j^2$, and $d_j\gg R_j\gg a_j$. Put $\tau_j=\exp(-d_j/m_j)$. Then $0<\tau_j<1$ and $m_j\log\tau_j=-d_j$. Define
\begin{equation}\label{eq:psi}
 \psi_j(\zeta)=\frac{m_j}{2}\log\frac{|\zeta/a_j-1|^2+\tau_j^2}{1+\tau_j^2},
 \qquad \zeta\in\C.
\end{equation}
Each $\psi_j$ is continuous and subharmonic, and $\psi_j(0)=0$. Its minimum is attained at $a_j$, where
\begin{equation}\label{eq:psi-center}
 \psi_j(a_j)=m_j\log\tau_j-\frac{m_j}{2}\log(1+\tau_j^2)
 \le m_j\log\tau_j=-d_j.
\end{equation}
Define
\begin{equation}\label{eq:u}
 u(\zeta)=\frac7{15}\log(1+|\zeta|^2)+\sum_{j=1}^{\infty}\psi_j(\zeta).
\end{equation}

\begin{lemma}\label{lem:convergence}
The series in \eqref{eq:u} converges locally uniformly on $\C$. Consequently $u$ is a continuous subharmonic function and $u(0)=0$.
\end{lemma}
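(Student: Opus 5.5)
We use the Weierstrass M-test on compact sets, after checking that each $\psi_j$ is continuous and subharmonic and that $\psi_j(0)=0$. Fix $r>0$ and consider $|\zeta|\le r$. The argument of the logarithm in \eqref{eq:psi} is
\[
\frac{|\zeta/a_j-1|^2+\tau_j^2}{1+\tau_j^2}=1+\frac{|\zeta|^2/a_j^2-2\operatorname{Re}(\zeta/a_j)}{1+\tau_j^2}.
\]
This quantity is large when $j$ is large, and no uniform-in-$\zeta$ smallness of $\psi_j$ comes from it. Instead we bound $|\psi_j|$ crudely, using that the prefactor $m_j=16^{-j}$ decays geometrically while the logarithm grows only linearly in $j$.

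\emph{Upper bound.} For $|\zeta|\le r$ we have $|\zeta/a_j-1|^2+\tau_j^2\le (r4^j+1)^2+1$. Hence
\[
\psi_j\le \frac{m_j}{2}\log\bigl((r4^j+1)^2+1\bigr)\le C_r\, j\,16^{-j}.
\]

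\emph{Lower bound.} The numerator satisfies $|\zeta/a_j-1|^2+\tau_j^2\ge\tau_j^2$ and the denominator is at most $2$. Therefore
\[
\psi_j\ge m_j\log\tau_j-\frac{m_j}{2}\log 2=-d_j-\frac{m_j}{2}\log2 .
\]
Since $d_j=2^{-j/2}$ is summable, this gives $\psi_j\ge -(2^{-j/2}+16^{-j})$.

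Combining the two bounds, $\sup_{|\zeta|\le r}|\psi_j(\zeta)|\le C_r j16^{-j}+2^{-j/2}+16^{-j}$, which is summable in $j$. The series therefore converges absolutely and uniformly on $\{|\zeta|\le r\}$.

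Each $\psi_j$ is continuous because its logarithm's argument is bounded below by $\tau_j^2/(1+\tau_j^2)>0$. It is subharmonic because $\log(|f|^2+c)$ is subharmonic for holomorphic $f$ and $c>0$; indeed $\log(|f|^2+|g|^2)$ is subharmonic, here with $g$ a constant. Adding the negative constant $-\tfrac{m_j}{2}\log(1+\tau_j^2)$ does not affect subharmonicity. The term $\tfrac7{15}\log(1+|\zeta|^2)$ is continuous and subharmonic by the same reasoning.

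A locally uniform limit of continuous subharmonic functions is continuous and subharmonic, since the sub-mean-value inequality passes to uniform limits. Hence $u$ is continuous and subharmonic. Finally, $\psi_j(0)=\tfrac{m_j}{2}\log\tfrac{1+\tau_j^2}{1+\tau_j^2}=0$ and $\log(1+0)=0$, so $u(0)=0$.

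The only point needing care is the lower bound. Near $a_j$ the function $\psi_j$ has a deep well, of depth $d_j$. Uniform convergence survives because the depths $d_j$ form a summable sequence. This is exactly why the core radius was chosen as $\tau_j=\exp(-d_j/m_j)$, which makes $m_j\log\tau_j=-d_j$ rather than something larger.
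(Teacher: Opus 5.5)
Your proof is correct and follows the paper's argument. You apply a Weierstrass M-test on compact sets, using the lower bound $\psi_j\ge -d_j-\tfrac{m_j}{2}\log 2$ (which comes from $\tau_j<1$ and $m_j\log\tau_j=-d_j$) and the upper bound $\psi_j\le C_r\,j\,m_j$, and then pass to the locally uniform limit; the only addition is that you verify continuity and subharmonicity of each $\psi_j$ explicitly, which the paper simply asserts before the lemma.
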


\begin{proof}
Since $0<\tau_j<1$, formulae \eqref{eq:psi} and \eqref{eq:psi-center} give $\psi_j\ge-d_j-Cm_j$. On a fixed compact set $K$, we have $|\zeta/a_j-1|\le C_K/a_j$, hence $\psi_j\le C_Km_j(1+j)$. Therefore $|\psi_j|\le d_j+C_Km_j(1+j)$ on $K$. Both $\sum d_j$ and $\sum jm_j$ converge, so the Weierstrass test gives local uniform convergence. Local uniform limits of continuous subharmonic functions are continuous and subharmonic. Finally every summand vanishes at $0$, and hence $u(0)=0$.
\end{proof}

The wells are deep at their centers.

\begin{lemma}\label{lem:deep}
There is $j_0$ such that, for all $j\ge j_0$,
\begin{equation}\label{eq:deep}
 u(a_j)\le-\frac12d_j.
\end{equation}
\end{lemma}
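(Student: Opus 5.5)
We evaluate $u(a_j)$ term by term: $u(a_j)=\frac7{15}\log(1+a_j^2)+\psi_j(a_j)+\sum_{k\ne j}\psi_k(a_j)$. By \eqref{eq:psi-center}, $\psi_j(a_j)\le -d_j$. The first term is at most $\frac7{15}a_j^2=\frac7{15}16^{-j}$, which is negligible compared with $d_j=2^{-j/2}$. It therefore suffices to show that $\sum_{k\ne j}\psi_k(a_j)\le \frac13 d_j$ for large $j$; the negative terms only help. So we bound each $\psi_k(a_j)$ from above.

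For $k\neq j$ we have $\psi_k(\zeta)\le \frac{m_k}{2}\log(|\zeta/a_k-1|^2+1)$, using $\tau_k<1$ and $1+\tau_k^2\ge1$. At $\zeta=a_j$ this becomes $\frac{m_k}{2}\log\bigl((a_j/a_k-1)^2+1\bigr)$.

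\emph{Case $k>j$.} Here $a_j/a_k=4^{k-j}$, so the bound is at most $\frac{m_k}{2}\log(2\cdot 16^{k-j})\le C m_k (k-j+1)$. Summing over $k>j$ gives $O(16^{-j})$, since $\sum_{k>j}16^{-k}(k-j+1)\le C16^{-j}$.

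\emph{Case $k<j$.} Here $a_j/a_k=4^{-(j-k)}\le 1/4$, so $(a_j/a_k-1)^2+1\le 2$. The bound is at most $\frac{m_k}{2}\log 2$, and the sum over $k<j$ is bounded by a constant independent of $j$. That alone does not beat $d_j$, so this case is the main point and needs a sharper estimate. Write $x=a_j/a_k\in(0,1/4]$. Then $|x-1|^2+\tau_k^2\le 1+\tau_k^2$ whenever $x\le 1$, since $(1-x)^2\le1$. Hence $\psi_k(a_j)\le 0$ directly from \eqref{eq:psi}, which is exactly what we want.

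So for $k<j$ we use the exact form \eqref{eq:psi} instead of the crude bound, and get $\psi_k(a_j)\le0$. Combining the cases, $u(a_j)\le \frac7{15}16^{-j}-d_j+C16^{-j}$. This is $\le -\frac12 d_j$ once $C'16^{-j}\le \frac12 2^{-j/2}$, which holds for all $j\ge j_0$.

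The only step needing care is the $k<j$ range. Its naive bound is $O(1)$, and it is saved by the observation that $\psi_k\le0$ on the disc $|\zeta-a_k|\le a_k$, which contains $a_j$. The remaining estimates are routine geometric-sum bounds.
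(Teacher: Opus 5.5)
Your proof is correct and follows the paper's argument essentially line by line: the wells with $k<j$ are nonpositive at $a_j$ because $|a_j-a_k|<a_k$, the well with $k=j$ contributes at most $-d_j$, and the wells with $k>j$ together with the term $\frac{7}{15}\log(1+|\zeta|^2)$ contribute only $O(16^{-j})=o(d_j)$. The only difference is cosmetic: you bound $(4^{k-j}-1)^2+1\le 2\cdot 16^{k-j}$, whereas the paper bounds $(a_j/a_k+1)^2+1$.
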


\begin{proof}
If $k<j$, then $0<a_j<a_k$ and $|a_j-a_k|<a_k$, so $\psi_k(a_j)<0$. The $k=j$ term is at most $-d_j$ by \eqref{eq:psi-center}. If $k>j$, then $a_k<a_j$ and, since $\tau_k<1$,
$\psi_k(a_j)\le \frac{m_k}{2}\log((a_j/a_k+1)^2+1)\le m_k\log(3a_j/a_k)$.
Since $a_j/a_k=4^{k-j}$,
\[
 \sum_{k>j}\psi_k(a_j)\le C\sum_{k>j}16^{-k}(k-j+1)=O(16^{-j}).
\]
The first term in \eqref{eq:u} at $a_j$ is also $O(a_j^2)=O(16^{-j})$. Therefore $u(a_j)\le-d_j+C16^{-j}$, and \eqref{eq:deep} follows because $16^{-j}=o(d_j)$.
\end{proof}

The next estimate is the central point of the construction. 
\begin{lemma}\label{lem:sublevel}
For every $\zeta\in\mathbb C$ and every $j\ge1$,
\[
 \psi_j(\zeta)\le
 \frac{m_j}{a_j(1+\tau_j^2)}\,|\zeta|.
\]
Consequently,
$u(\zeta)\le \frac45|\zeta|$ for any $\zeta\in\mathbb C$.
\end{lemma}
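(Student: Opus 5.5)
The plan is to reduce the estimate for $\psi_j$ to a one-variable inequality in $r=|\zeta|/a_j$, proved by comparing derivatives, and then to sum over $j$.

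\textbf{Step 1: reduction to $r$.} By the triangle inequality, $|\zeta/a_j-1|\le 1+r$. Since $\log$ is increasing,
\[
 \psi_j(\zeta)\le m_j f(r),\qquad f(r)=\frac12\log\frac{(1+r)^2+\tau_j^2}{1+\tau_j^2},\quad r\ge0 .
\]
So it suffices to show $f(r)\le r/(1+\tau_j^2)$ for all $r\ge0$.

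\textbf{Step 2: the derivative comparison.} This is the only step needing care. The naive bound $(1+r)^2+\tau_j^2\le(1+\tau_j^2)(1+r)^2$ only gives $f(r)\le\log(1+r)\le r$, which loses the factor $1/(1+\tau_j^2)$. Instead, note that $f(0)=0$ and
\[
 f'(r)=\frac{1+r}{(1+r)^2+\tau_j^2}.
\]
The claim $f'(r)\le 1/(1+\tau_j^2)$ is equivalent to
\[
 (1+r)(1+\tau_j^2)\le(1+r)^2+\tau_j^2 ,
\]
that is, $\tau_j^2 r\le r+r^2$. This holds because $0<\tau_j<1$ and $r\ge0$. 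Integrating $f'$ from $0$ to $r$ gives $f(r)\le r/(1+\tau_j^2)$. Multiplying by $m_j$ yields the asserted bound $\psi_j(\zeta)\le \frac{m_j}{a_j(1+\tau_j^2)}|\zeta|$.

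\textbf{Step 3: the bound for $u$.} Drop the factor $1/(1+\tau_j^2)\le1$. Since $m_j/a_j=4^{-j}$,
\[
 \sum_{j\ge1}\psi_j(\zeta)\le |\zeta|\sum_{j\ge1}4^{-j}=\frac13|\zeta| .
\]
The summation is legitimate by the convergence established in Lemma~\ref{lem:convergence}. For the first term of \eqref{eq:u}, use $\log(1+t^2)\le t$ for $t\ge0$. This holds because both sides vanish at $t=0$ and $2t/(1+t^2)\le1$. Hence $\frac7{15}\log(1+|\zeta|^2)\le\frac7{15}|\zeta|$. Adding the two bounds gives $u(\zeta)\le(\frac7{15}+\frac13)|\zeta|=\frac45|\zeta|$.
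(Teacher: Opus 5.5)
Your proof is correct and follows the same route as the paper. Both arguments use the triangle inequality $|\zeta/a_j-1|\le 1+|\zeta|/a_j$, then a one-variable bound that keeps the factor $1/(1+\tau_j^2)$, then summation using $m_j/a_j=4^{-j}$ together with $\log(1+|\zeta|^2)\le|\zeta|$.

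The only difference is in the elementary inequality $\frac12\log\frac{(1+r)^2+\tau_j^2}{1+\tau_j^2}\le\frac{r}{1+\tau_j^2}$. You prove it by comparing derivatives. The paper, without spelling it out, writes the quotient as $1+\frac{2r+r^2}{1+\tau_j^2}\le 1+x+\frac{x^2}{2}$ with $x=\frac{2r}{1+\tau_j^2}$ (which uses $1+\tau_j^2\le 2$) and then invokes $\log(1+x+\frac{x^2}{2})\le x$.
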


\begin{proof} Note that
$\log(1+x+\frac{x^2}{2})\le x\quad\text{ for any }x\ge0$. Since $0<\tau_j<1$, we have
\[
 \psi_j(\zeta)
 \le \frac{m_j}{2}
 \log\frac{(1+\frac{|\zeta|}{a_j})^2+\tau_j^2}{1+\tau_j^2}\le \frac{m_j}{a_j(1+\tau_j^2)}\,|\zeta|.
\]
We now estimate $u$. We have  
$\log(1+|\zeta|^2)\le |\zeta|$ for any $\zeta\in\C$.
Using the estimate for each $\psi_j$ we obtain
$$ 
u(\zeta)
 \le \left(
 \frac{7}{15}
 +\sum_{j=1}^{\infty}
 \frac{m_j}{a_j(1+\tau_j^2)}
 \right)|\zeta|\le \left(
 \frac{7}{15}
 +\sum_{j=1}^{\infty}\frac{m_j}{a_j}
 \right)|\zeta|.
$$
We have $\sum_{j=1}^{\infty}\frac{m_j}{a_j}
 =\sum_{j=1}^{\infty}4^{-j}=\frac13$.
Consequently
\[
 u(\zeta)\le
 \left(\frac{7}{15}+\frac13\right)|\zeta|
 =\frac45|\zeta|.
\]
\end{proof}

We also need the precise logarithmic growth of $u$.

\begin{lemma}\label{lem:growth}
There exists a constant $C\in\mathbb R$ such that
\begin{equation}\label{eq:growth}
 u(\zeta)=\log|\zeta|+C+o(1)\qquad(|\zeta|\to\infty),
\end{equation}
uniformly with respect to the argument of $\zeta$.
\end{lemma}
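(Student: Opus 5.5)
Write $u(\zeta)-\log|\zeta|$ as a sum of pieces, each with a limit at infinity uniform in the argument, and verify that the sum of the limits converges. The total mass is $\frac{7}{15}\cdot 2+\sum_j m_j$, and $\sum_j m_j=\sum 16^{-j}=\frac1{15}$. The first term $\frac7{15}\log(1+|\zeta|^2)$ behaves like $\frac{14}{15}\log|\zeta|$. Each $\psi_j$ behaves like $m_j\log|\zeta|$ plus a constant. Hence the total logarithmic coefficient is $\frac{14}{15}+\frac1{15}=1$, which is exactly why the constant $7/15$ was chosen.

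Concretely, for $|\zeta|\ge 1$ write
\[
 \frac7{15}\log(1+|\zeta|^2)=\frac{14}{15}\log|\zeta|+\frac7{15}\log(1+|\zeta|^{-2}),
\]
whose second term tends to $0$. Similarly,
\[
 \psi_j(\zeta)=m_j\log|\zeta|+c_j+\varepsilon_j(\zeta),
\]
where $c_j=-m_j\log a_j-\frac{m_j}{2}\log(1+\tau_j^2)$ and
\[
 \varepsilon_j(\zeta)=\frac{m_j}{2}\log\bigl(|1-a_j/\zeta|^2+\tau_j^2a_j^2/|\zeta|^2\bigr).
\]
Summing over $j$ gives
\[
 u(\zeta)-\log|\zeta|=C+\frac7{15}\log(1+|\zeta|^{-2})+\sum_j\varepsilon_j(\zeta),
\]
where $C=\sum_j c_j$. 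This series converges absolutely because $|c_j|\le m_j(j\log4+1)$ and $\sum jm_j<\infty$. The rearrangement into these three series is legitimate pointwise, since each of them converges, including the one for $\varepsilon_j$ shown next.

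The remaining step, which is the only real point, is to show $\sum_j\varepsilon_j(\zeta)\to0$ uniformly as $|\zeta|\to\infty$. Since $a_j\le 1/4$, for $|\zeta|\ge1$ we have $|a_j/\zeta|\le 1/4$. The argument of the logarithm in $\varepsilon_j$ is therefore at least $(3/4)^2$ and at most $(5/4)^2+1$. Moreover it equals $1+O(a_j/|\zeta|)$ with an absolute implied constant, because $\tau_j^2a_j^2/|\zeta|^2\le a_j/|\zeta|$. Hence $|\varepsilon_j(\zeta)|\le Cm_ja_j/|\zeta|$. This bound is summable in $j$ and gives $\sum_j|\varepsilon_j(\zeta)|\le C'/|\zeta|\to0$, uniformly in $\arg\zeta$. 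This proves \eqref{eq:growth}.

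I expect no serious obstacle. The only care needed is to confirm that the wells near $0$ do not spoil uniformity. They do not: their centres $a_j$ all lie in the disc $|\zeta|\le1/4$, so on $|\zeta|\ge1$ every $\varepsilon_j$ is a small perturbation with the uniform bound above.
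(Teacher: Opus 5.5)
Your proof is correct and follows the paper's argument essentially verbatim: the same splitting $\psi_j=m_j\log|\zeta|+c_j+\varepsilon_j$ (your $\varepsilon_j$ is the paper's $r_j$), the bound $|\varepsilon_j|\le Cm_ja_j/|\zeta|$, and the mass count $\frac{14}{15}+\frac1{15}=1$. Your observation that $a_j\le 1/4$ makes the bound hold for every $j$ when $|\zeta|\ge1$, which slightly streamlines the paper's separate handling of finitely many small indices.
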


\begin{proof}
For $|\zeta|\to\infty$, formula \eqref{eq:psi} gives
$\psi_j(\zeta)=m_j\log|\zeta|-m_j\log a_j-\frac{m_j}{2}\log(1+\tau_j^2)+r_j(\zeta)$,
where
\[
 r_j(\zeta)=\frac{m_j}{2}\log\left(|1-a_j/\zeta|^2+\tau_j^2a_j^2/|\zeta|^2\right).
\]
For $|\zeta|\ge1$ and all sufficiently large $j$, $|r_j(\zeta)|\le Cm_ja_j/|\zeta|$; the finitely many remaining indices satisfy the same estimate after increasing $C$. Hence $\sum_jr_j(\zeta)\to0$ uniformly as $|\zeta|\to\infty$. Also $\sum_jm_j|\log a_j|<\infty$ and $0\le\log(1+\tau_j^2)\le\log2$. Put $M=\sum_jm_j=1/15$. The first term in \eqref{eq:u} equals $(1-M)\log|\zeta|+o(1)$, while the logarithmic coefficient contributed by the wells is $M$. Hence the total coefficient of $\log|\zeta|$ is one, giving \eqref{eq:growth}.
\end{proof}

\section{Homogeneous lifting to $\C^2$}

Define, for $w\ne0$,
\begin{equation}\label{eq:h}
 h(z,w)=|w|e^{u(z/w)}.
\end{equation}
By Lemma~\ref{lem:growth}, $h(z,w)\to e^C|z|$ as $w\to0$ with $z\ne0$. We therefore put $h(z,0)=e^C|z|$ and $h(0,0)=0$.

\begin{lemma}\label{lem:h}
The function $h:\C^2\to[0,\infty)$ is continuous, positive on $\C^2\setminus\{0\}$, plurisubharmonic, and homogeneous: $h(\lambda z,\lambda w)=|\lambda|h(z,w)$ for all $\lambda\in\C$.
\end{lemma}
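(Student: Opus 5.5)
Homogeneity and positivity come straight from the definition; continuity and plurisubharmonicity need the growth Lemma~\ref{lem:growth} at the line $\{w=0\}$.

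\emph{Homogeneity and positivity.} For $w\ne0$ and $\lambda\ne0$ we have $h(\lambda z,\lambda w)=|\lambda||w|e^{u(z/w)}$, since $\lambda z/\lambda w=z/w$. On $\{w=0\}$ the identity follows from $h(z,0)=e^C|z|$. The case $\lambda=0$ is trivial because $h(0,0)=0$. Positivity off the origin is clear: $e^{u}>0$ and $e^C|z|>0$ for $z\ne0$.

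\emph{Continuity.} On $\{w\ne0\}$, $h$ is continuous because $u$ is continuous by Lemma~\ref{lem:convergence}.

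At a point $(z_0,0)$ with $z_0\ne0$, take $(z,w)\to(z_0,0)$ with $w\ne0$. Then $|z/w|\to\infty$, and Lemma~\ref{lem:growth} gives
\[
|w|e^{u(z/w)}=|w|\,\frac{|z|}{|w|}\,e^{C+o(1)}=|z|e^{C+o(1)}\to e^C|z_0|.
\]
Points with $w=0$ are handled by the formula $e^C|z|$ directly.

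At the origin, Lemma~\ref{lem:growth} and continuity of $u$ give a global bound $u(\zeta)\le\log(1+|\zeta|)+C'$. Hence
\[
h(z,w)\le e^{C'}(|w|+|z|)\to0 .
\]

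\emph{Plurisubharmonicity.} On $\{w\ne0\}$ write $\log h(z,w)=\log|w|+u(z/w)$. Here $\log|w|$ is pluriharmonic, and $u\circ(z/w)$ is plurisubharmonic as a subharmonic function composed with a holomorphic map. So $\log h$ is psh there, and therefore $h=e^{\log h}$ is psh.

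To extend across $\{w=0\}$, note that $h$ is continuous on $\C^2$ and psh off the analytic set $\{w=0\}$. It is locally bounded above, so by the removable singularity theorem for psh functions (e.g.\ \cite{Hormander}) the extension $\limsup h$ is psh. Since $h$ is continuous, this extension coincides with $h$.

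Alternatively, one can check the sub-mean value property on complex lines directly. A line not contained in $\{w=0\}$ meets it in at most one point. There the restriction is continuous and subharmonic off a point, hence subharmonic, since a bounded subharmonic function extends across a polar set. Lines inside $\{w=0\}$ give $e^C|z|$, which is subharmonic.

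The main obstacle is continuity at the line $\{w=0\}$. This needs the asymptotics of Lemma~\ref{lem:growth} with the logarithmic coefficient exactly one and uniformity in the argument, which is exactly what that lemma provides. Everything else is routine.
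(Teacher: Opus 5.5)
Your proof is correct. Homogeneity, positivity and continuity go essentially as in the paper. The paper bounds $h$ near the origin via homogeneity and continuity on the unit sphere, rather than your global bound $u(\zeta)\le\log(1+|\zeta|)+C'$; this difference is cosmetic.

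Where you genuinely differ is plurisubharmonicity. The paper never has to remove the line $\{w=0\}$. Using $\frac7{15}=1-M$ with $M=\sum_j m_j$, it rewrites $\log h$ as formula~\eqref{eq:v}. This is a locally uniformly convergent sum of logarithms of positive semidefinite Hermitian forms on $\C^2\setminus\{0\}$, and each term is finite at points $(z,0)$ with $z\ne0$. So $\log h$ is psh on $\C^2\setminus\{0\}$ outright, and only the origin must be removed, using $\log h(Z)\le\log\|Z\|+O(1)$.

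You instead obtain plurisubharmonicity only on $\{w\ne0\}$, from $\log|w|+u(z/w)$. You then remove the whole hypersurface $\{w=0\}$ using the continuity already established. Your line-by-line version of this step is fully elementary, since each complex line not contained in $\{w=0\}$ meets it in at most one point, which is polar.

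What each route buys:
\begin{itemize}
\item Your route is shorter. It uses nothing about $u$ beyond continuity, subharmonicity and the asymptotics of Lemma~\ref{lem:growth}, so it works for any such $u$ without the specific cancellation of the $\log|w|$ terms.
\item The paper's route is explicit and delivers plurisubharmonicity of $\log h$ directly. That is the form invoked in Proposition~\ref{prop:G}.
\end{itemize}

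You can recover $\log h$ psh at no extra cost, in either of two ways. First, run your removability argument on $\log h$, which is continuous on $\C^2\setminus\{0\}$, and then remove the origin. Second, use homogeneity: $h(Z)|e^{\ell(Z)}|=h(e^{\ell(Z)}Z)$ is psh for every linear $\ell$, which is equivalent to $\log h$ being psh.
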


\begin{proof}
Continuity away from $w=0$ is clear. Lemma~\ref{lem:growth} gives continuity at every $(z,0)$ with $z\ne0$. Homogeneity then implies continuity at the origin: since $h$ is continuous on the unit sphere, $h(Z)\le C_1\|Z\|$ for all $Z$ near $0$. Positivity away from $0$ is immediate from the definition and from $h(z,0)=e^C|z|$.

On $\{w\ne0\}$, put $v=\log h$. The choice $M=\sum_jm_j$ makes the $\log|w|$ terms cancel exactly, and one obtains
\begin{equation}\label{eq:v}
\begin{split}
 v(z,w)={}&\frac{1-M}{2}\log(|z|^2+|w|^2)\\
 &+\sum_{j=1}^{\infty}\frac{m_j}{2}
 \log\frac{|z/a_j-w|^2+\tau_j^2|w|^2}{1+\tau_j^2}.
\end{split}
\end{equation}
Away from the origin the series in \eqref{eq:v} converges locally uniformly. Indeed, on a compact $K\Subset\C^2\setminus\{0\}$ choose $\eta>0$ so that $|z|+|w|\ge\eta$ on $K$. If $|z|\ge\eta/2$, then for all large $j$ the $j$th logarithm is bounded below by $-Cm_j$ and above by $C_Km_j(1+j)$. If $|w|\ge\eta/2$, its lower bound is $-d_j-C_Km_j$ and the same upper bound holds. Thus the series is uniformly dominated on $K$ by a summable sequence, since $\sum d_j<\infty$ and $\sum jm_j<\infty$. Every partial sum is plurisubharmonic, because each logarithm is the logarithm of a positive semidefinite Hermitian quadratic form. Hence \eqref{eq:v} defines a plurisubharmonic function on $\C^2\setminus\{0\}$.

At the origin, homogeneity and boundedness of $h$ on the unit sphere give $v(Z)\le\log\|Z\|+O(1)$, so $v$ extends plurisubharmonically by $v(0,0)=-\infty$. Formula \eqref{eq:v} also shows directly that on $w=0$, $v(z,0)=\log|z|+C$, in agreement with Lemma~\ref{lem:growth}. Finally $h=e^v$ is plurisubharmonic because the exponential is convex and increasing. Homogeneity follows directly from \eqref{eq:h} and the extension to $w=0$.
\end{proof}

\begin{proposition}\label{prop:G}
The set
\begin{equation}\label{eq:G}
 G=\{(z,w)\in\C^2:h(z,w)<1\}
\end{equation}
is a bounded balanced pseudoconvex domain, and $h$ is its continuous Minkowski function.
\end{proposition}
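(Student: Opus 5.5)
All the needed properties of $h$ are already in Lemma~\ref{lem:h}, so the plan is to read off each claimed property of $G$ from it.

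\emph{Openness, balancedness, and the Minkowski function.} Since $h$ is continuous, $G$ is open. Homogeneity gives $h(\lambda Z)=|\lambda|h(Z)\le h(Z)<1$ for $|\lambda|\le1$, so $\lambda G\subset G$; in particular $0\in G$ because $h(0)=0$.

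\emph{Connectedness.} Each $Z\in G$ is joined to $0$ by the segment $\{tZ:t\in[0,1]\}$, which lies in $G$. Hence $G$ is star-shaped with respect to the origin, and therefore a domain.

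\emph{Boundedness.} By Lemma~\ref{lem:h}, $h$ is continuous and positive on the compact unit sphere $S\subset\C^2$. So $c:=\min_S h>0$, and homogeneity yields
\[
h(Z)\ge c\|Z\| \quad\text{for all } Z.
\]
Consequently $G\subset\{\|Z\|<1/c\}$.

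\emph{Identification of the Minkowski function.} Fix $Z$. By homogeneity, $Z/t\in G$ if and only if $h(Z)/t<1$, i.e.\ if and only if $t>h(Z)$. Therefore
\[
h_G(Z)=\inf\{t>0: t>h(Z)\}=h(Z).
\]
This holds for $Z=0$ as well, since both sides vanish there. So $h_G=h$, and $h_G$ is continuous by Lemma~\ref{lem:h}.

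\emph{Pseudoconvexity.} Here I use that $G=\{h<1\}$ is a sublevel set of a continuous plurisubharmonic function $h$ on $\C^2$. The function
\[
\varphi=-\log(1-h)
\]
is continuous and plurisubharmonic on $G$, because $t\mapsto-\log(1-t)$ is convex and increasing on $[0,1)$. Together with $\|Z\|^2$, the function $\max(\varphi,\|Z\|^2)$ is a continuous psh exhaustion of $G$. Indeed, since $G$ is bounded, the sets $\{\varphi\le c'\}=\{h\le 1-e^{-c'}\}$ are closed and bounded, hence compact, and they are contained in $G$. So $G$ is pseudoconvex. Alternatively, one can quote the standard fact that a balanced domain is pseudoconvex iff its Minkowski function is psh (or $\log h_G$ is psh), which is also covered by \cite{Hormander}.

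\emph{Main obstacle.} None of the steps is substantive. The only point needing care is exhausting the bounded set $G$: this is where positivity of $h$ on the sphere is used, since it makes the sublevel sets of $\varphi$ relatively compact in $G$. Equivalently, boundedness guarantees that closed sublevel sets $\{h\le s\}$ with $s<1$ are compact subsets of $G$.
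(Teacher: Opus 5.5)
Your proof is correct and follows the paper's argument essentially step for step. Openness and balancedness come from continuity and homogeneity, boundedness from $c=\min_{\|Z\|=1}h>0$, $h_G=h$ from homogeneity, and pseudoconvexity from the continuous psh exhaustion $-\log(1-h)$; the paper also cites the balanced-domain criterion for $\log h$, which you mention as an alternative. Taking the maximum with $\|Z\|^2$ is harmless but unnecessary: since $G$ is bounded, your own compactness argument already shows that $-\log(1-h)$ alone is an exhaustion.
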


\begin{proof}
By continuity and homogeneity, $G$ is open and balanced, and it contains the origin; hence it is connected. Since $h$ is positive on the unit sphere, compactness gives $c:=\min_{\|Z\|=1}h(Z)>0$. Thus $h(Z)\ge c\|Z\|$, so $G\subset B(0,c^{-1})$ and is bounded. Since $\log h$ is plurisubharmonic on $\C^2$ in the extended sense, the standard criterion for balanced domains gives that $G=\{h<1\}$ is pseudoconvex. Equivalently, $-\log(1-h)$ is a continuous plurisubharmonic exhaustion of $G$. Homogeneity gives directly $h_G=h$. In particular, by the solution of the Levi problem, $G$ is a domain of holomorphy.
\end{proof}

\section{A finite Kobayashi-length chain}

We now show that $G$ is not Kobayashi complete. Discard finitely many initial indices and assume from now on that Lemmas~\ref{lem:deep} and~\ref{lem:sublevel} hold. Put $P_j=(a_j,1)$. Since $h(P_j)=e^{u(a_j)}<1$, we have $P_j\in G$, whereas $P_j\to P=(0,1)$ and $h(P)=e^{u(0)}=1$. Thus $P\in\partial G$.

Write $s_j=-u(a_j)$, so $s_j\ge d_j/2$. Set $t_j=\frac 85R_j$ and $r_j=e^{-t_j}$. Then $B(0,R_j)\subset\{u<t_j\}$. The proof uses two elementary geometric estimates.

\begin{lemma}\label{lem:radial}
Let $a\in\C$, put $s=-u(a)>0$, and let $0<t\le s$ and $r=e^{-t}$. Then $k_G((a,1),r(a,1))\le t/s$.
\end{lemma}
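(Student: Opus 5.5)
We use the holomorphic disc through the complex line spanned by $(a,1)$. Since $G$ is balanced with Minkowski function $h$, the map $\varphi(\lambda)=\lambda(a,1)$ sends the disc $\{|\lambda|<1/h(a,1)\}$ into $G$. Moreover $h(a,1)=e^{u(a)}=e^{-s}$, so this disc has radius $e^{s}$. After rescaling we get a holomorphic map $f:\D\to G$, $f(\xi)=e^{s}\xi\,(a,1)$. It satisfies $f(e^{-s})=(a,1)$ and $f(re^{-s})=r(a,1)$.

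By the distance-decreasing property of $k_G$ under holomorphic maps (indeed $k_G$ is dominated by the Lempert function),
\[
k_G((a,1),r(a,1))\le p_{\D}(e^{-s},e^{-s-t}).
\]
It remains to bound this Poincaré distance on the positive real axis. For $0<y<x<1$ we have
\[
p_{\D}(x,y)=\operatorname{arctanh}\frac{x-y}{1-xy}=\tfrac12\log\frac{(1+x)(1-y)}{(1-x)(1+y)}.
\]
This is $\tfrac12\big(\ell(y)-\ell(x)\big)$, where $\ell(x)=\log\frac{1+x}{1-x}$ fails to be the right potential, so we parametrize by $\sigma=-\log x$ instead. Then $\frac{d}{d\sigma}p_{\D}=\frac{x}{1-x^2}$. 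Writing $x=e^{-\sigma}$, this equals $\frac{1}{2\sinh\sigma}$.

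Hence
\[
p_{\D}(e^{-s},e^{-s-t})=\int_s^{s+t}\frac{d\sigma}{2\sinh\sigma}.
\]
The elementary inequality $\sinh\sigma\ge\sigma$ bounds this by
\[
\int_s^{s+t}\frac{d\sigma}{2\sigma}=\tfrac12\log\Big(1+\frac ts\Big)\le \frac{t}{2s}\le \frac ts.
\]

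The hypothesis $t\le s$ is not even needed for this bound. The only point requiring care is the endpoint computation: the points $e^{-s}$ and $e^{-s-t}$ lie near $0$ when $s$ is large, not near the boundary. So the $\sinh$ bound, rather than a boundary asymptotic, is the right tool. The main obstacle is simply checking the correct parametrization of the disc, and it is routine.
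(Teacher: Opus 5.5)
Your argument is correct. Its first step is exactly the paper's: the disc $\xi\mapsto e^{s}\xi(a,1)$ from $\D$ into $G$, together with the distance-decreasing property, reduces everything to $p_{\D}(e^{-s},e^{-s-t})$.

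You differ only in the one-variable estimate.
\begin{itemize}
\item The paper bounds the pseudohyperbolic quantity $q=\frac{e^{-s}(1-e^{-t})}{1-e^{-(2s+t)}}\le\frac{t}{2\sinh s}\le\frac{t}{2s}$. It then needs $t\le s$ to get $q\le\frac12$, so that $\operatorname{arctanh}q\le 2q$.
\item You use additivity along the real diameter, $p_{\D}(e^{-s},e^{-s-t})=\operatorname{arctanh}e^{-s}-\operatorname{arctanh}e^{-s-t}=\int_s^{s+t}\frac{d\sigma}{2\sinh\sigma}$, together with $\sinh\sigma\ge\sigma$.
\end{itemize}
Your route gives the sharper bound $\frac12\log(1+t/s)\le\frac{t}{2s}$ and makes the hypothesis $t\le s$ unnecessary. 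The checks $t_j\le s_j$ and $t_j\le s_{j+1}$ in the proof of Theorem~\ref{thm:main} could then be dropped. The paper's route works directly with the explicit M\"obius formula, at the cost of that hypothesis.

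Two minor slips, neither affecting validity:
\begin{itemize}
\item The difference is $\frac12(\ell(x)-\ell(y))$, not $\frac12(\ell(y)-\ell(x))$. Also, $\ell=2\operatorname{arctanh}$ is precisely the right potential: the identity you wrote is what justifies your integral formula.
\item Your closing remark has the regime backwards. In the application $s=s_j\to0$, so $e^{-s}$ and $e^{-s-t}$ lie near the boundary point $1$. That is exactly where $\sinh\sigma\approx\sigma$ makes your bound nearly sharp.
\end{itemize}
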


\begin{proof}
The map $\lambda\mapsto\lambda(a,1)$ sends $e^s\D$ into $G$, because $h(\lambda a,\lambda)=|\lambda|e^{-s}$. Hence $k_G((a,1),r(a,1))\le p_{\D}(e^{-s},e^{-(s+t)})$. The corresponding pseudohyperbolic distance satisfies
\[
 q=\frac{e^{-s}(1-e^{-t})}{1-e^{-(2s+t)}}
 \le\frac{e^{-s}t}{1-e^{-2s}}
 =\frac{t}{e^s-e^{-s}}\le\frac{t}{2s}.
\]
Since $t\le s$, we have $q\le1/2$, and therefore $\operatorname{arctanh}q\le2q\le t/s$.
\end{proof}

\begin{lemma}\label{lem:horizontal}
Suppose that $B(0,R)\subset\{u<t\}$, put $r=e^{-t}$, and let $|a|,|b|\le R/2$. Then $k_G(r(a,1),r(b,1))\le C|a-b|/R$, where $C$ is an absolute constant.
\end{lemma}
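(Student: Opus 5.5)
The idea is to connect the two points by a single holomorphic disc in $G$ and then use the distance-decreasing property of $k_G$. I would take the affine map
\[
\varphi(\lambda)=r(\lambda,1),\qquad \lambda\in B(0,R).
\]
For every $\lambda$ with $|\lambda|<R$, homogeneity (Lemma~\ref{lem:h}) gives
\[
h(\varphi(\lambda))=r\,h(\lambda,1)=e^{-t}e^{u(\lambda)}.
\]
By the hypothesis $B(0,R)\subset\{u<t\}$ we have $u(\lambda)<t$, so $h(\varphi(\lambda))<1$. Hence $\varphi$ maps $B(0,R)$ holomorphically into $G$. Also $\varphi(a)=r(a,1)$ and $\varphi(b)=r(b,1)$.

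Next I rescale. The map $\zeta\mapsto\varphi(R\zeta)$ sends $\D$ into $G$. Since $k_G$ is contractive under holomorphic maps from the disc and $k_{\D}=p_{\D}$, we get
\[
k_G(r(a,1),r(b,1))\le p_{\D}(a/R,\,b/R).
\]

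It remains to estimate the Poincar\'e distance of two points in the disc of radius $1/2$. Put $\alpha=a/R$ and $\beta=b/R$, so $|\alpha|,|\beta|\le 1/2$. Then $|1-\overline\alpha\beta|\ge 3/4$, and therefore
\[
q:=\left|\frac{\alpha-\beta}{1-\overline\alpha\beta}\right|\le \frac43|\alpha-\beta|\le\frac43 .
\]
This bound is too weak to control $\operatorname{arctanh}$ directly, so I use instead the uniform bound $q\le (1/2+1/2)/(3/4)$ is not enough either; the cleaner route is that $\alpha$ and $\beta$ lie in the compact disc $\overline{B(0,1/2)}$, where the pseudohyperbolic distance is bounded away from $1$. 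Explicitly, for $|\alpha|,|\beta|\le 1/2$ one has $q\le 4/5$, since the pseudohyperbolic diameter of that disc is $2\cdot\frac12/(1+\frac14)=4/5$.

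Because $\operatorname{arctanh}$ is Lipschitz on $[0,4/5]$ with constant $1/(1-16/25)=25/9$, we obtain
\[
p_{\D}(\alpha,\beta)\le \frac{25}{9}\,q\le \frac{25}{9}\cdot\frac43\,|\alpha-\beta| .
\]
This gives the statement with an absolute constant $C$, for example $C=100/27$.

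The only point needing care is the bound on $q$ away from $1$, which the explicit $4/5$ settles. Nothing beyond the sublevel hypothesis and homogeneity is used; in particular the constant does not depend on $t$ or on $u$.
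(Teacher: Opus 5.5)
Your proof is correct and is essentially the paper's argument: the same disc $\zeta\mapsto r(\zeta,1)$ on $R\D$, the contraction property of $k_G$, and the bound $|1-\overline\alpha\beta|\ge 3/4$, combined with the fact that the pseudohyperbolic distance stays bounded away from $1$ on $\overline{B(0,1/2)}$. Your explicit diameter bound $q\le 4/5$ and the Lipschitz constant $25/9$ for $\operatorname{arctanh}$ only make the paper's constant concrete ($C=100/27$); you should delete the muddled false start in the middle paragraph.
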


\begin{proof}
The map $F:R\D\to\C^2$, $F(\zeta)=r(\zeta,1)$, takes values in $G$, because $h(F(\zeta))=re^{u(\zeta)}<e^{-t}e^t=1$. Therefore $k_G(r(a,1),r(b,1))\le p_{R\D}(a,b)=p_{\D}(a/R,b/R)$. If $|x|,|y|\le1/2$, their pseudohyperbolic distance is $|x-y|/|1-\overline{x}y|\le\frac43|x-y|$ and is uniformly bounded away from $1$. Hence $p_{\D}(x,y)\le C|x-y|$, which gives the assertion.
\end{proof}

\begin{proof}[Proof of Theorem~\ref{thm:main}]
For all sufficiently large $j$, we have $t_j\le s_j$ and $t_j\le s_{j+1}$. Lemma~\ref{lem:radial}, first with $a=a_j$ and then with $a=a_{j+1}$, gives $k_G(P_j,r_jP_j)\le t_j/s_j\le C R_j/d_j$ and $k_G(r_jP_{j+1},P_{j+1})\le t_j/s_{j+1}\le C R_j/d_{j+1}$. Moreover, $a_j/R_j=2^{-j}$ and $a_{j+1}/R_j=2^{-j-2}$, so Lemma~\ref{lem:horizontal} gives $k_G(r_jP_j,r_jP_{j+1})\le C|a_j-a_{j+1}|/R_j$. By the triangle inequality,
\[
 k_G(P_j,P_{j+1})\le C\left(
 \frac{R_j}{d_j}+\frac{|a_j-a_{j+1}|}{R_j}+\frac{R_j}{d_{j+1}}
 \right).
\]
Here $R_j/d_j=2^{-j/2}$, $R_j/d_{j+1}=\sqrt2\,2^{-j/2}$, and $|a_j-a_{j+1}|/R_j\le2^{-j}$. Hence $\sum_jk_G(P_j,P_{j+1})<\infty$. It follows that $(P_j)$ is a $k_G$-Cauchy sequence. But $P_j\to(0,1)\in\partial G$, so it has no limit in $G$. Thus $(G,k_G)$ is not complete.

Because $G$ is bounded, it is Kobayashi hyperbolic. For the Kobayashi distance on a bounded domain, completeness is equivalent to Kobayashi finite compactness; see, for example,~\cite{JPbook1,JPbook2}. Therefore $G$ is not Kobayashi-finitely-compact either. Proposition~\ref{prop:G} gives all remaining assertions.
\end{proof}

\section{Remarks on the construction}

\begin{remark}
The original construction of Jarnicki and Pflug~\cite{JPcounterexample} begins with a noncomplete domain in $\C^2$ and homogenizes it to a balanced domain in $\C^3$. Lowering that argument by one dimension would require a corresponding one-dimensional noncomplete hyperbolic domain, which seems to be impossible.
\end{remark}

\begin{remark}
The particular sequences are not essential. What is used is the existence of $a_j\to0$, radii $R_j\gg a_j$, masses $m_j$, and depths $d_j$ such that the logarithmic wells have depth at least $d_j$, their positive influence on $B(0,R_j)$ is $O(R_j)$, and both $\sum R_j/d_j$ and $\sum |a_j-a_{j+1}|/R_j$ converge. The choices $a_j=4^{-j}$, $R_j=2^{-j}$, $m_j=16^{-j}$ and $d_j=2^{-j/2}$ make all estimates transparent.
\end{remark}

\medskip
\noindent\textit{Computational assistance.} ChatGPT was used in the exploration, verification, and editing of this manuscript. The author verified the mathematical arguments and takes responsibility for the final text.

\end{document}